\newcommand*{\rom}[1]{\expandafter\@slowromancap\romannumeral #1@}
\title{T\MakeLowercase{he} \MakeLowercase{volume of the boundary of a} S\MakeLowercase{obolev} $(p,q)$-\MakeLowercase{extension domain} \rom{2}}
\author{Pekka Koskela, Riddhi Mishra}
\address{P.\ Koskela: Department of Mathematics and Statistics, University of Jyv\"askyl\"a, P.O. Box 35 (MaD), FI-40014, University of Jyv\"askyl\"a, Finland. {\tt pekka.j.koskela@jyu.fi }}
\address{R.\ Mishra: Department of Mathematics and Statistics, University of Jyv\"askyl\"a, P.O. Box 35 (MaD), FI-40014, University of Jyv\"askyl\"a, Finland. {\tt riddhi.r.mishra@jyu.fi }}
\newtheorem{theorem}{Theorem}
\newtheorem{lemma}[theorem]{Lemma}
\newtheorem{corollary}[theorem]{Corollary}
\theoremstyle{definition}
\newtheorem{definition}[theorem]{Definition}
\newcommand{\barint}{
\rule[.036in]{.12in}{.009in}\kern-.16in \displaystyle\int }
\newcommand{\barcal}{\mbox{$ \rule[.036in]{.11in}{.007in}\kern-.128in\int $}}
\newcommand{\diam}{\mathop{\mathrm {diam}}}
\newcommand{\mbbr}{\mathbb R}
\def\diam{\operatorname{diam}}
\def\mvint_#1{\mathchoice
          {\mathop{\vrule width 6pt height 3 pt depth -2.5pt
                  \kern -8pt \intop}\nolimits_{\kern -3pt #1}}%
          {\mathop{\vrule width 5pt height 3 pt depth -2.6pt
                  \kern -6pt \intop}\nolimits_{#1}}%
          {\mathop{\vrule width 5pt height 3 pt depth -2.6pt
                  \kern -6pt \intop}\nolimits_{#1}}%
          {\mathop{\vrule width 5pt height 3 pt depth -2.6pt
                  \kern -6pt \intop}\nolimits_{#1}}}
\numberwithin{theorem}{section} \numberwithin{equation}{section}
\subjclass[2020]{46E35}
\keywords{Sobolev function, Sobolev extension domain, Set function}
\thanks{Both authors have been supported by the Academy of Finland via Centre of Excellence in Analysis and Dynamics
Research (Project number 323960)}
\begin{document}
\maketitle
\begin{abstract}
    We show that the volume of the boundary of a bounded Sobolev $(p,q)$-extension domain is zero when $1\leq q <p< \frac{qn}{(n-q)}.$
\end{abstract}
\section{Introduction}
Let $\Omega \subset \mbbr^n$ be a bounded $(W^{1,p},W^{1,q})$ -Sobolev extension domain with $1\leq q <p.$ This simply means that every function
in $W^{1,p}(\Omega)$ has an extension that belongs to $W^{1,q}(\mbbr^n),$
with norm controlled by the original norm.
In the case $q=p,$ the domain $\Omega$ is then necessarily locally Ahlfors
regular in the sense that
\begin{equation}\label{ahlfors}
 | B(x,r)\cap \Omega| \ge \delta{r}^n
\end{equation} holds for every $x \in \partial \Omega$ and each $0<r<1.$
Especially, the volume of $\partial\Omega$ must be zero. For this see
\cite{HKT} and also \cite{GOM},\cite{MR1039115}, \cite{SKV} for earlier
results. However, \eqref{ahlfors} does not need to hold in the case $1\leq
q<p$ as shown in the work of Maz'ya and Poborchi \cite{MVP}. Indeed, this
condition fails at tips of monomial exterior cusps, which are extension
domains for suitable values of $q$ and $p$. The best version of \eqref
{ahlfors} known up to now is  \begin{equation} \label{sahlfors}
 |B(x,r)\cap\Omega|\geq \delta r^{s}
\end{equation}
under the assumption that $q>n-1$ ($q \ge 1$ when $n=2$),
where $s>n$ depends on $q,p$ and $n,$ and tends to infinity for a fixed $q
$ when $p$ tends to infinity \cite{PUZ}. When $1\leq q\leq n-1$ with $n
\ge 3,$ one can not bound the decay order in general. On the other hand,
the validity of \eqref{sahlfors} for all $x \in \partial\Omega$ of a given
domain $\Omega$ does not rule out the possibility of $\partial \Omega$
having positive volume. Nevertheless, it was shown in \cite{PUZ} that $
\partial \Omega$ is necessarily of volume zero when $q>n-1.$ The bound $n-
1$ is sharp in the sense that examples of extension domains with positive
boundary volume for $n\geq 3$ were constructed in \cite{PUZ} for any value
of $1\leq q<n-1,$ for a suitable $p.$ The natural question of the
dependence of $p$ in terms of $q$ in such examples was also raised in
\cite[Question 8.3]{PUZ}.
Our main result shows that the volume of $\partial\Omega$ is necessarily
zero when $1\leq q<p< q^*,$ where $q^*=\frac{qn}{(n-q)}$ when $1\leq q<n$
and we define $q^*=\infty$ when $q\ge n.$ This is the first result without
additional density assumptions on the domain in question when $1\leq q\leq
n-1.$
\begin{theorem}
\label{Thm1}
Let $\Omega \subset \mathbb{R}^n$ and $1\leq q< p< q^*$ be given. If $\Omega$ is a bounded $ (W^{1,p},W^{1,q})$ -extension domain, then $|\partial \Omega|=0.$
\end{theorem}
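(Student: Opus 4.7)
The plan is to argue by contradiction: assuming $|\partial\Omega|>0$, I will combine the extension property with the Sobolev embedding $W^{1,q}(\mathbb{R}^n)\hookrightarrow L^{q^*}(\mathbb{R}^n)$ (and its analogues for $q\ge n$, where $W^{1,q}$ embeds into every $L^s$, $s<\infty$, or into $C^0_{\mathrm{loc}}$) to extract information about the extension on $\partial\Omega$, then use test functions concentrated at a density point of $\partial\Omega$ to reach a contradiction. The key observation that makes $p<q^*$ enter essentially is that, for a fixed ball $B_0\supset\overline\Omega$, the composition of the extension operator with Sobolev embedding gives
$$\|Eu\|_{L^p(B_0)}\le C\,\|Eu\|_{L^{q^*}(B_0)}\le C'\,\|u\|_{W^{1,p}(\Omega)}\qquad\text{for every }u\in W^{1,p}(\Omega),$$
since $p\le q^*$ and $B_0$ is bounded. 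In particular, any extension $v=Eu$ is controlled in $L^p(\partial\Omega)$ by the $W^{1,p}(\Omega)$-norm of $u$; this is the \emph{a priori} bound I want to push to a contradiction.

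With this in hand, I would choose, via Lebesgue's density theorem, a point $x_0\in\partial\Omega$ satisfying
$$\varepsilon(r):=\frac{|B(x_0,r)\cap\Omega|}{|B(x_0,r)|}\longrightarrow 0\quad\text{as }r\to 0^+,$$
and test the extension estimate on the restriction $u_r:=\phi_r|_\Omega$ of a standard cut-off $\phi_r\in C_c^\infty(B(x_0,r))$ equal to $1$ on $B(x_0,r/2)$ with $|\nabla\phi_r|\le C/r$. A direct computation yields
$$\|u_r\|_{W^{1,p}(\Omega)}^p\lesssim \varepsilon(r)\,r^{n-p}\qquad(r\ll 1),$$
so that the gradient of $v_r:=Eu_r$ is small in $L^q(\mathbb{R}^n)$, while $v_r\equiv 1$ a.e.\ on $B(x_0,r/2)\cap\Omega$.

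The final step is to apply the scale-invariant Sobolev--Poincar\'e inequality to $v_r$ on $B=B(x_0,r)$. Using $v_r\equiv 1$ on $B(x_0,r/2)\cap\Omega$ one gets the lower bound
$$\|v_r-(v_r)_B\|_{L^{q^*}(B)}^{q^*}\ge |B(x_0,r/2)\cap\Omega|\cdot|1-(v_r)_B|^{q^*},$$
which, combined with $\|v_r-(v_r)_B\|_{L^{q^*}(B)}\le C\|\nabla v_r\|_{L^q(B)}$, pins $(v_r)_B$ down quantitatively in terms of $\varepsilon(r)$ and $r$. Together with the global $L^p(B_0)$ control from the first paragraph, this should force a relation of the form $\varepsilon(r)\gtrsim r^\gamma$ for some $\gamma>0$ depending on $n,p,q$, contradicting $\varepsilon(r)\to 0$. \emph{The main obstacle} is exactly this last step: one must control the behavior of $v_r$ on $B(x_0,r)\setminus\Omega$, a set predominantly made up of $\partial\Omega$ at a density point, on which $v_r$ is not determined by $u_r$. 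A likely remedy is to work with a specific bounded linear extension operator (or a capacitary minimizer), so that $(v_r)_B$ and the values of $v_r$ on $\partial\Omega$ can be estimated; alternatively one can sum test functions over a Whitney-type decomposition so as to saturate the $L^p(\partial\Omega)$ bound. This is precisely where the condition $p<q^*$ does the work, since it is what converts the $W^{1,q}$-norm of the extension into $L^p$-mass sitting on $\partial\Omega$, which a boundary of positive volume cannot accommodate.
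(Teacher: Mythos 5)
The proposal identifies some of the right ingredients (test functions concentrated at a Lebesgue density point of $\partial\Omega$, the Sobolev--Poincar\'e inequality, and the essential role of $p<q^*$), but it has two genuine gaps that are not repairable by the remedies you suggest.

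First, the ``main obstacle'' you flag --- that the extension $v_r=Eu_r$ is not determined on $B(x_0,r)\setminus\Omega$, which occupies most of the ball at a density point --- is indeed the crux, and your proposed fixes do not address it. Assuming a linear (or capacitary) extension operator would not prove the stated theorem, which allows an arbitrary bounded extension. The paper's device is different: it introduces a bounded quasiadditive set function $\Phi$ built from the extension property, and uses the Lebesgue-type differentiation theorem for quasiadditive set functions to obtain a \emph{localized} a priori bound of the form $\|\nabla Eu\|_{L^q(B(x,r))}\leq C_x\,|B(x,r)|^{1/q-1/p}\,\|u\|_{W^{1,p}(B(x,r)\cap\Omega)}$ valid at almost every $x\in\partial\Omega$ (Corollary~\ref{corollary}). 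It is precisely the volume factor $|B(x,r)|^{1/q-1/p}$ that makes the subsequent local Poincar\'e estimate usable, and the proof ultimately concludes by showing $\overline{D\Phi}(x)=\infty$ on the set $E$ of density points, which forces $|E|=0$. Nothing in your outline provides this localization.

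Second, and more fundamentally, the final contradiction you aim for does not exist. A bound of the form $\varepsilon(r)\gtrsim r^{\gamma}$ with $\gamma>0$ is perfectly compatible with $\varepsilon(r)\to 0$; it is not a contradiction. In fact, exactly such a polynomial lower bound $|B(x,r)\cap\Omega|\geq\delta r^s$ (equivalently $\varepsilon(r)\gtrsim r^{s-n}$) is established in the paper (Lemma~\ref{Lbound}) by an argument quite close in spirit to your test-function computation, and this alone does not finish the proof. What is needed on top of it, and what the paper provides via a careful iteration scheme (Lemmas~\ref{lemma1} and~\ref{upper}), is that at a density point with $\overline{D\Phi}(x)<\infty$ the ratio $|B(x,r/2)\cap\Omega|/|B(x,r)\cap\Omega|$ can be made smaller than any prescribed $c\in(0,1/2)$ for all small $r$. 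Iterating gives $|B(x,r/2^j)\cap\Omega|\leq c^j|B(x,r)\cap\Omega|$, i.e.\ \emph{super-polynomial} decay, and it is the incompatibility between this and the polynomial lower bound that yields the contradiction. Your sketch has no mechanism for producing decay faster than polynomial, so the argument cannot close.
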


The conclusion of Theorem \ref{Thm1} is new only when $q\le n-1$ since the case of
$q>n-1$ is already covered by \cite{PUZ}.
We do not recover the full scope of the case $q>n-1$ because our technique
does not apply capacity estimates that indeed give more refined
information when $q>n-1.$

Notice that we have not required the extension operator to be linear nor
it to be in any sense local. Even in the case of linearity, locality may
fail in a strong sense. Indeed, \cite[Theorem 1.2, Theorem 5.1]{PUZ} exhibit a bounded $(W^{1,p},W^
{1,q})$ -extension domain with a bounded linear extension operator that
extends a function $u,$ which is identically one on $B(x,r)\cap \Omega,$
to $Eu$ which vanishes on a set of positive volume contained in $B
(x,r)\cap \partial \Omega.$ Actually, ruling out this behavior is equivalent to the boundary being of volume zero \cite[Theorem 5.1]{PUZ}. Theorem
\ref{Thm1} thus shows that this kind of a pathological situation is only possible when $p$ is large in terms of $q.$

Let us close this introduction by explaining the strategy of our proof of
Theorem \ref{Thm1}. We begin by establishing a version of \eqref{sahlfors}
under our assumption $p<q^*.$ This seems to be the first result of this
type for small values of $q.$ We succeed in this by adapting a technique
from \cite{HKT} to our setting. This estimate by itself is not enough. We
continue by showing that the decay order of the volume has to exceed this
bound at almost every point of density of the  boundary. This is
established via a delicate iteration argument with the help of a set
function from \cite{PUZ}.
This set function was used in \cite{PUZ} for lower bounds of the volume,
but we succeed here to apply it towards upper bounds. Our estimates here
again strongly rely on the assumption $p<q^*.$
\section{Preliminaries}
\begin{definition}\label{de:sobolev}
Let $1\leq p<\infty.$ For $u\in L^p(\Omega)$, we say that $u$ belongs to the Sobolev space $W^{1, p}(\Omega),$ if $u$ is weakly differentiable and its weak (distributional) gradient $\nabla u$ belongs to $L^p(\Omega; \mathbb{R}^n)$. The Sobolev space $W^{1, p}(\Omega)$ is equipped with the norm
\[\|u\|_{W^{1, p}(\Omega)}:=
\left(\int_\Omega\left|u(x)\right|^p+\left|\nabla u(x)\right|^pdx\right)^{\frac{1}{p}}. 
\] 
\end{definition}
\begin{definition}
 Let $1\leq q\leq p<\infty$ and $n\geq 2$ be given. A bounded domain $\Omega\subset\mathbb{R}^n$ is said to be a $(W^{1, p}, W^{1, q})$-extension domain (Sobolev $(p,q)$-extension domain in short) if there exists a bounded extension operator 
 \begin{equation}
    E:W^{1, p}(\Omega)\to W^{1, q}(\mathbb{R}^n),  
 \end{equation}   
 that is, $Eu_{|\Omega}=u$ and there exist $C>0$ such that 
 \begin{equation*}
     \|Eu\|_{W^{1,q}(\mathbb{R}^n)}\leq C \|u\|_{W^{1,p}(\Omega)}.
 \end{equation*}
\end{definition} 

Our next lemma shows that we may assume in what follows that $1\leq q<n.$
\begin{lemma}\label{reduction}
   Let $1\leq q< p <\infty$ and $\Omega$ be a bounded Sobolev $(p,q)$-extension domain. Then $\Omega$ is also a bounded Sobolev $(p,s)$-extension domain for all $1 \leq s<q.$ 
\end{lemma}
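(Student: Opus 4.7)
The goal is to build a bounded extension operator $\tilde E : W^{1,p}(\Omega)\to W^{1,s}(\mathbb{R}^n)$ out of the given bounded extension $E:W^{1,p}(\Omega)\to W^{1,q}(\mathbb{R}^n)$. The only obstacle to using $E$ itself is that $W^{1,q}(\mathbb{R}^n)$ is not contained in $W^{1,s}(\mathbb{R}^n)$ when $s<q$, because $\mathbb{R}^n$ has infinite measure. The natural fix is to localize $Eu$ to a fixed bounded neighborhood of $\Omega$ and then use H\"older's inequality, which is available because $s<q$.

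Concretely, since $\Omega$ is bounded, fix $R>0$ with $\overline{\Omega}\subset B(0,R)$, and choose a smooth cutoff $\eta\in C_c^\infty(\mathbb{R}^n)$ with $0\le\eta\le 1$, $\eta\equiv 1$ on $B(0,R)$, $\supp \eta\subset B(0,2R)$, and $|\nabla\eta|\le C/R$. Define
\[
\tilde E u := \eta\cdot Eu.
\]
Then $\tilde E u = Eu = u$ on $\Omega$ since $\eta\equiv 1$ on $\Omega$, and $\tilde E u$ is supported in the ball $B(0,2R)$ of finite measure. The product rule gives $\nabla(\eta Eu)=(\nabla\eta)\,Eu+\eta\,\nabla(Eu)$, which is a measurable function in $L^q(\mathbb{R}^n;\mathbb{R}^n)$ supported in $B(0,2R)$.

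Next, since $1\le s<q$ and $\tilde E u$ is supported in a set of finite measure, H\"older's inequality yields
\[
\|\tilde E u\|_{L^s(\mathbb{R}^n)}\le |B(0,2R)|^{\frac{1}{s}-\frac{1}{q}}\,\|\tilde E u\|_{L^q(\mathbb{R}^n)}
\le C(n,R,s,q)\,\|Eu\|_{L^q(\mathbb{R}^n)},
\]
and the same argument applied to $\nabla(\eta Eu)$, using the bound $|\nabla\eta|\le C/R$, gives
\[
\|\nabla \tilde E u\|_{L^s(\mathbb{R}^n)}\le C(n,R,s,q)\bigl(\|Eu\|_{L^q(\mathbb{R}^n)}+\|\nabla Eu\|_{L^q(\mathbb{R}^n)}\bigr).
\]
Combining these with the assumed bound $\|Eu\|_{W^{1,q}(\mathbb{R}^n)}\le C\|u\|_{W^{1,p}(\Omega)}$ shows that $\tilde E$ maps $W^{1,p}(\Omega)$ boundedly into $W^{1,s}(\mathbb{R}^n)$.

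I do not expect any real obstacle here: the proof is essentially a one-line observation (cut off and apply H\"older) and the constants depend only on $n$, $s$, $q$, $R$, and the operator norm of $E$. The only point worth being careful about is that the lemma does not require $E$ to be linear, so $\tilde E$ is just defined pointwise by $\tilde E u=\eta\cdot Eu$ and inherits whatever regularity $E$ possesses; boundedness of $\tilde E$ as an operator between the two Sobolev spaces is all that is claimed and all that is needed.
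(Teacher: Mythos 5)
Your proof is correct and uses the same central idea as the paper: localize $Eu$ to a fixed bounded set and apply H\"older's inequality to pass from $L^q$ to $L^s$. The only cosmetic difference is that you handle the globalization with a smooth cutoff $\eta$ (plus the product rule), while the paper restricts $Eu$ to a ball $B\supset\supset\Omega$ and then invokes that $B$ is itself a bounded $(s,s)$-extension domain; both are standard and equivalent here.
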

\begin{proof}
    Let $B\subset\mathbb{R}^n$ be a ball with $\Omega \subset\subset B.$ Since $\Omega \subset \mathbb{R}^n$ is a Sobolev $(p,q)$-extension domain, for every $u\in W^{1,p}(\Omega),$ there exists $Eu \in W^{1,q}(B)$ such that $Eu{|}_{\Omega}\equiv u$ and 
    \begin{equation}\label{ineq1}
        \|Eu\|_{W^{1,q}(B)}\leq C \|u\|_{W^{1,p}(\Omega)},
    \end{equation}
    with a constant $C$ independent of $u.$\\
    By using Hölder's inequality, we get that for $1\leq s <q$
    \begin{equation}\label{holder}
        \|Eu\|_{W^{1,s}(B)}\leq |B|^{\frac{1}{s}-\frac{1}{q}}\|Eu\|_{W^{1,q}(\Omega)}.
    \end{equation}
 By combining \eqref{holder} and \eqref{ineq1}, we conclude that
 \begin{equation}\label{maineq}
     \|Eu\|_{W^{1,s}(B)}\leq C |B|^{\frac{1}{s}-\frac{1}{q}} \|u\|_{W^{1,p}(\Omega)}.
 \end{equation}
 Since $B$ is a bounded Sobolev $(s,s)$-extension domain,  by \eqref{maineq} we deduce that $\Omega$ is a bounded Sobolev $(p,s)$-extension domain. 
\end{proof}

We recall two important estimates.\\
 \textbf{Global Sobolev embedding theorem }:- Let $1\leq p<n.$ Then, there exists a constant $C= C(n,p)$ such that 
\begin{equation}\label{sobolev}
\left(\int_{\mathbb{R}^n}|u(x)|^{\frac{np}{n-p}}
dx\right)^{\frac{n-p}{pn}}\leq C\left(\int_{\mathbb{R}^n}|\nabla u(x)|^pdx\right)^{\frac{1}{p}}
\end{equation}
holds for each $u\in W^{1, p}(\mathbb{R}^n)$.\\
\textbf{Sobolev-Poincar\'e inequality}:-Let $1\leq p<n.$ Let $y\in \mathbb{R}^n$ and $r>0.$ There exists a constant $C= C(n,p)$ such that 
\begin{equation}\label{eq:locp-poin}
\left(\int_{B(y,r)}|u(x)-u_{B(y,r)}|^{\frac{np}{n-p}}dx\right)^{\frac{n-p}{np}}\leq C\left(\int_{B(y,2r)}|\nabla u(x)|^pdx\right)^{\frac{1}{p}}
\end{equation}
for each ball $B(y,r)\subset \mathbb{R}^n$ and every $u\in W^{1,p}(B(y,r)).$ Here, 
 $u_{B(y,r)}= \frac{1}{|B(y,r)|}\int_{B(y,r)}u(x) dx.$

We also need a set function associated to our extension operator. We borrow it from \cite{PUZ}, also see [\cite{1}, \cite{2}]. For convenience of the reader we review its construction and its crucial properties.
\begin{definition}
    Let $\mathcal{M}$ be a collection of open subsets of $\mathbb{R}^n.$ A function $\Phi,$
    \begin{equation*}
        \Phi :\mathcal{M}\to [0,\infty)
    \end{equation*}
    is said to be a quasiadditive set function if, for all $U_{1}\subset U_{2},
  \hspace{2mm} \text{where}\hspace{2mm} U_{1}, U_{2} \in \mathcal{M},$
    \begin{equation*}
        \Phi(U_{1})\leq \Phi(U_{2})
    \end{equation*}
    and there exists a positive constant $C$ such that for every collection of pairwise disjoint open sets $\{U_{i}\in \mathcal{M}\}_{i\in \mathbb{N}}$ we have
    \begin{equation}\label{constant}
        \sum_{i=1}^{\infty}\Phi(U_{i})\leq C\Phi\left(\bigcup_{i=1}^{\infty}U_{i}\right).
    \end{equation}
\end{definition}
The upper and lower derivatives of a quasiadditive set function,  defined on $\mathcal{M}$ containing all open balls, are
\begin{equation*}
    \overline{D\Phi}(x)= \limsup_{r\to0^{+}}{\frac{\Phi(B(x,r))}{|B(x,r)|}}\hspace{2mm}\text{and}\hspace{2mm} \underline{D\Phi}(x)= \liminf_{r\to 0^{+}}{\frac{\Phi(B(x,r))}{|B(x,r)|}}.
\end{equation*}
We formulate a result from \cite{3} and \cite{RPR}, in a convenient form.
\begin{lemma}\label{lemmafun}
    Let $\Phi$ be a quasiadditive set function defined on $\mathcal{M}$ containing all open balls. Suppose that there exists $M>0$ such that $\Phi(U)\leq M$ for all $U\in \mathcal{M}.$ Then for almost all points $x\in\mathbb{R}^n,$ the upper derivative is finite and 
        \begin{equation}\label{2.6}
            \overline{D\Phi}(x)\leq C\underline{D\Phi}(x)<\infty.
        \end{equation}
    
\end{lemma}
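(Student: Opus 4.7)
The plan is to establish both conclusions of the lemma by Vitali covering arguments. The uniform bound $\Phi\le M$ together with quasi-additivity gives a weak-type control on the set where $\overline{D\Phi}$ is large, and the comparison $\overline{D\Phi}\le C\underline{D\Phi}$ follows from a ``double Vitali'' self-improving estimate.

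For finiteness, fix $\lambda>0$ and $R>0$. For each $x$ with $\overline{D\Phi}(x)>\lambda$ there are arbitrarily small balls $B(x,r)\subset B(0,R)$ with $\Phi(B(x,r))>\lambda|B(x,r)|$, so these balls form a Vitali cover of $\{\overline{D\Phi}>\lambda\}\cap B(0,R)$. Extract a pairwise disjoint subcollection $\{B_i\}$ covering this set up to Lebesgue measure zero. Then quasi-additivity and the uniform bound give
\[
\lambda\sum_i|B_i|\le\sum_i\Phi(B_i)\le C\,\Phi\Bigl(\bigcup_i B_i\Bigr)\le CM,
\]
so $|\{\overline{D\Phi}>\lambda\}\cap B(0,R)|\le CM/\lambda$. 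Letting first $\lambda\to\infty$ and then $R\to\infty$ yields $\overline{D\Phi}<\infty$ almost everywhere.

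For the comparison, it suffices to show that for every pair of rationals $a<b$ with $b>Ca$ the set
\[
F_{a,b}:=\{x\in\mathbb{R}^n:\underline{D\Phi}(x)<a<b<\overline{D\Phi}(x)\}
\]
has measure zero; a countable union over such pairs then gives $\overline{D\Phi}\le C\underline{D\Phi}$ almost everywhere. Fix such $a,b$, fix $R>0$ and set $E:=F_{a,b}\cap B(0,R)$. By outer regularity pick an open set $U\supset E$ with $|U|\le|E|+\eps$. Since $\underline{D\Phi}(x)<a$ on $E$, each $x\in E$ admits arbitrarily small balls $B(x,r)\subset U$ with $\Phi(B(x,r))<a|B(x,r)|$; Vitali produces a disjoint subcollection $\{B_j\}\subset U$ covering $E$ up to measure zero, in particular $\sum_j|B_j|\le|U|\le|E|+\eps$. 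Inside each $B_j$ run a second Vitali cover: for $x\in E\cap B_j$, since $\overline{D\Phi}(x)>b$, there are arbitrarily small balls $B(x,s)\subset B_j$ with $\Phi(B(x,s))>b|B(x,s)|$, giving a disjoint subcollection $\{B_{j,k}\}_k$ of sub-balls of $B_j$ covering $E\cap B_j$ up to measure zero. Quasi-additivity and monotonicity yield
\[
b\sum_k|B_{j,k}|\le\sum_k\Phi(B_{j,k})\le C\,\Phi(B_j)\le Ca|B_j|,
\]
so $|E\cap B_j|\le(Ca/b)|B_j|$. Summing in $j$ and using the outer bound,
\[
|E|\le\frac{Ca}{b}\sum_j|B_j|\le\frac{Ca}{b}\bigl(|E|+\eps\bigr).
\]
Since $Ca/b<1$, sending $\eps\to0$ forces $|E|=0$, and $R\to\infty$ completes the argument.

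The main obstacle will be arranging the two nested Vitali decompositions so that the self-improving inequality closes up: the outer balls $\{B_j\}$ must be squeezed into an open neighborhood of $E$ of measure arbitrarily close to $|E|$, which is precisely what turns $\sum_j|B_j|$ into an essentially $|E|$-sized quantity. The quasi-additivity constant $C$ enters exactly in the threshold $b>Ca$, which is why the conclusion is $\overline{D\Phi}\le C\underline{D\Phi}$ rather than equality.
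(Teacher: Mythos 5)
The paper states this lemma as a citation---``We formulate a result from [3] and [RPR], in a convenient form''---and supplies no proof of its own, so there is nothing in the text to compare your argument against. Taken on its own, your two-stage Vitali argument is correct and supplies the missing proof. The first extraction gives the weak-type bound $|\{\overline{D\Phi}>\lambda\}\cap B(0,R)|\le CM/\lambda$, hence $\overline{D\Phi}<\infty$ almost everywhere; this finiteness step is needed so that, at a point where $\overline{D\Phi}(x)>C\underline{D\Phi}(x)$, a rational pair $a<b$ with $b>Ca$ can be chosen that sandwiches $\underline{D\Phi}(x)$ below $a$ and $\overline{D\Phi}(x)$ above $b$. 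The nested extraction is also sound: the outer disjoint balls $B_j\subset U$ carry $\Phi(B_j)<a|B_j|$ and total measure at most $|E|+\eps$, the inner disjoint balls $B_{j,k}\subset B_j$ carry $\Phi(B_{j,k})>b|B_{j,k}|$, and quasi-additivity together with monotonicity give $b\sum_k|B_{j,k}|\le\sum_k\Phi(B_{j,k})\le C\Phi(B_j)<Ca|B_j|$, so $|E\cap B_j|\le (Ca/b)|B_j|$; summing over $j$ closes the self-improving estimate $|E|\le (Ca/b)(|E|+\eps)$. Two routine technicalities are left implicit and deserve a sentence each if this were to be written out in full: the Vitali covering theorem is applied to sets of finite outer measure (ensured by intersecting with $B(0,R)$), and since $\Phi$ is defined only on open sets one should either run Vitali on closed balls and pass to their open interiors afterward, or invoke an open-ball version; neither affects the conclusion. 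The constant $C$ in the final inequality is correctly traced to the quasi-additivity constant from the defining inequality of $\Phi$.
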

The constant $C$ above is the same as the one in \eqref{constant}.\\
From \eqref{2.6}, we conclude that
\begin{equation}\label{funcset}
    \Phi(B(x,r))\leq C_x |B(x,r)|,
\end{equation}
for all $0<r<r_x,$ where both $C_x$ and $r_x$ may depend on $x.$

Let $\Omega\subset \mathbb{R}^n$ be a bounded Sobolev $(p,q)$-extension domain.
 Suppose that $U\subset \mathbb{R}^n$ is an open set with $U\cap\Omega\neq \phi$. We define
\begin{equation*}
    W_{0}^{p}(U, \Omega):= \{u\in C(\Omega)\cap W^{1,p}(\Omega): u\equiv 0 \hspace{2mm} \text{on}\hspace{2mm} \Omega\setminus U\}.
\end{equation*}
For every open set $U\subset \mathbb{R}^n$ such that $U\cap\Omega\neq 0$ and each $u\in W_{0}^{p}(U,\Omega),$ we deﬁne the $q-$Dirichlet energy $\Gamma^{q}_{U}$ on $U$ with respect to the boundary value $u$ by setting
\begin{equation*}
    \Gamma_{U}^{q}(u):= \inf\left\{\left(\int_{U}|\nabla v(z)|^qdz\right)^{\frac{1}{q}}:v\in W^{1,q}(U),v|_{U\cap \Omega}\equiv u\right\}.
\end{equation*}
Then the set function $\Phi$ is defined by setting
\begin{equation}\label{setfun}
    \Phi(U):= \sup\left\{\left(\frac{\Gamma_{U}^{q}(u)}{\|u\|_{W^{p}_{0}(U\cap\Omega)}}\right)^k: u\in W_{0}^{p}(U,\Omega)\right\}
\end{equation}
with $\frac{1}{k}=\frac{1}{q}-\frac{1}{p},$ and by setting $\Phi(U)=0$ for those open sets that do not intersect $\Omega.$

The following theorems are taken from \cite{PUZ}. According to the first theorem,  $\Phi$ is a quasiadditive set function. The second theorem gives an important gradient estimate.
\begin{theorem}
Let $1\leq q<p<\infty$ and $\Omega\subset \mathbb{R}^n$ be a bounded Sobolev $(p,q)$-extension domain. Then the set function $\Phi$ deﬁned in \cref{setfun} is a bounded quasiadditive set function deﬁned on open subsets $U\subset\mathbb{R}^n.$
\end{theorem}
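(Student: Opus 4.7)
The plan is to verify the three ingredients of quasiadditivity for $\Phi$: monotonicity, a uniform bound, and the subadditivity estimate. Monotonicity is essentially built into the definition. If $U_1\subset U_2$, then $W_0^p(U_1,\Omega)\subset W_0^p(U_2,\Omega)$ (vanishing outside $U_1$ is the stronger condition), the norm $\|u\|_{W^{1,p}(\Omega)}$ does not depend on the ambient open set, and restricting any admissible $w$ for $\Gamma_{U_2}^q(u)$ to $U_1$ gives an admissible competitor for $\Gamma_{U_1}^q(u)$. Hence $\Gamma_{U_1}^q(u)\le \Gamma_{U_2}^q(u)$ and $\Phi(U_1)\le \Phi(U_2)$. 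The uniform bound comes directly from the extension operator: for every $u\in W_0^p(U,\Omega)$, the restriction $Eu|_U$ is admissible for $\Gamma_U^q(u)$, so
\[
\Gamma_U^q(u)\ \le\ \|Eu\|_{W^{1,q}(\mathbb{R}^n)}\ \le\ C\,\|u\|_{W^{1,p}(\Omega)},
\]
and therefore $\Phi(U)\le C^k$ uniformly in $U$.

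The substantive step is subadditivity. Reduce first to a finite subfamily $U_1,\dots,U_N$; the countable case then follows since monotonicity gives $\sum_{i=1}^N\Phi(U_i)\le \Phi(\bigcup_{i=1}^N U_i)\le \Phi(\bigcup_{i=1}^\infty U_i)$. For each $i$ with $\Phi(U_i)>0$, fix $\epsilon>0$ and pick an almost-extremiser $u_i\in W_0^p(U_i,\Omega)$, writing $B_i=\Gamma_{U_i}^q(u_i)$ and $C_i=\|u_i\|_{W^{1,p}(\Omega)}$, so that $(B_i/C_i)^k\ge \Phi(U_i)-\epsilon/N$. Disjointness of the $U_i$ forces each $u_j$ to vanish on $U_i\cap\Omega$ for $j\ne i$, hence for any weights $\alpha_i\ge 0$ the superposition $v=\sum_{i=1}^N\alpha_i u_i$ belongs to $W_0^p(\bigcup_i U_i,\Omega)$ and satisfies $\|v\|_{W^{1,p}(\Omega)}^p=\sum_i\alpha_i^p C_i^p$. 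Moreover, for any admissible $w$ in the definition of $\Gamma_{\bigcup_i U_i}^q(v)$, the slice $w|_{U_i}$ is admissible for $\Gamma_{U_i}^q(\alpha_i u_i)=\alpha_i B_i$; summing $q$th powers over the disjoint cover gives $(\Gamma_{\bigcup_i U_i}^q(v))^q\ge \sum_i\alpha_i^q B_i^q$.

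The crucial trick is to choose the $\alpha_i$ cleverly. Exploiting the exponent identity $1/k=1/q-1/p$, equivalently $k/q=k/p+1$, I take $\alpha_i=B_i^{k/p}C_i^{-k/q}$; a short calculation then yields $\alpha_i^q B_i^q=\alpha_i^p C_i^p=(B_i/C_i)^k$. Writing $S:=\sum_{i=1}^N (B_i/C_i)^k$, both of the sums above collapse to $S$, and
\[
\Phi\Bigl(\bigcup_{i=1}^N U_i\Bigr)\ \ge\ \frac{(\Gamma_{\bigcup_i U_i}^q(v))^k}{\|v\|_{W^{1,p}(\Omega)}^k}\ \ge\ \frac{S^{k/q}}{S^{k/p}}\ =\ S^{k/q-k/p}\ =\ S\ \ge\ \sum_{i=1}^N \Phi(U_i)-\epsilon.
\]
Letting $\epsilon\to 0$ gives subadditivity with constant $1$ for finite disjoint collections, and the countable case follows by monotonicity as noted.

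The main obstacle is precisely this weighting: a naive choice such as $\alpha_i=1$ invokes H\"older and loses a factor, giving only a suboptimal constant, whereas $\alpha_i=B_i^{k/p}C_i^{-k/q}$ uses the very exponent $k$ appearing in the definition of $\Phi$ to balance the $q$-Dirichlet and $p$-Sobolev sums termwise. The remaining checks are routine: continuity and $W^{1,p}$-regularity of the finite sum $v$ are immediate from disjointness of the supports together with $u_i\in C(\Omega)\cap W^{1,p}(\Omega)$, and the degenerate cases $B_i=0$ or $C_i=0$ contribute zero to both sides and can simply be omitted.
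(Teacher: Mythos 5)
This statement is quoted in the paper from \cite{PUZ}; the paper itself offers no proof, so a line-by-line comparison is not possible. Evaluated on its own terms, your argument is correct, and the weighting is the essential idea. Monotonicity and the bound $\Phi(U)\le C^k$ go through exactly as you describe, once one notes that for $u\in W_0^p(U,\Omega)$ the denominator $\|u\|_{W^{1,p}(U\cap\Omega)}$ in the definition of $\Phi$ equals $\|u\|_{W^{1,p}(\Omega)}$ because $u$ vanishes on $\Omega\setminus U$. For the superadditivity, the choice $\alpha_i=B_i^{k/p}C_i^{-k/q}$ is exactly right: using $k/q=k/p+1$ (i.e.\ $1/k=1/q-1/p$) one verifies $\alpha_i^qB_i^q=\alpha_i^pC_i^p=(B_i/C_i)^k$, so both the $q$-Dirichlet lower bound and $\|v\|^p_{W^{1,p}(\Omega)}$ collapse to $S=\sum_i(B_i/C_i)^k$ and the ratio gives $\Phi(\bigcup_iU_i)\ge S^{k(1/q-1/p)}=S$. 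The slicing of an admissible competitor $w$ for $\Gamma^q_{\bigcup_iU_i}(v)$ into competitors $w|_{U_i}$ for $\Gamma^q_{U_i}(\alpha_iu_i)$, and the factorization of the $p$-norm across disjoint supports, are sound for a finite family, and the countable case does follow from monotonicity. You are also right that the naive $\alpha_i=1$ with H\"older yields an inequality pointing the wrong way, so the weighting is not a cosmetic optimization but the mechanism that makes the proof work. The only points to polish are minor: state explicitly the homogeneity $\Gamma^q_{U_i}(\alpha_iu_i)=\alpha_i\Gamma^q_{U_i}(u_i)$, and note that when $\Phi(U_i)=0$ one drops $U_i$, while when $\Phi(U_i)>0$ a near-extremiser with $B_i>0$ exists, so the weights are well defined. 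As a bonus, your argument gives the quasiadditivity constant $C=1$, which is as sharp as the definition permits.
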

\begin{theorem}\label{thmset}
  Let $1 \leq q < p < \infty$. Let $\Omega \subset \mathbb{R}^n$ be a bounded Sobolev $(p,q)$-extension domain and $\Phi$ be the set function from \cref{setfun}. Then, for each ball $B := B(x, r)$ with $x \in \partial \Omega $ and every function $u \in W^{p}_{0}(B,\Omega)$  there exists a function $v\in W^{1,q}(B)$ with $v|_{B\cap \Omega}\equiv u$ and 
  \begin{equation}
\|\nabla v\|_{L^q(B)}\leq 2\Phi^{\frac{1}{k}}(B)\|u\|_{W^{1,p}(B\cap\Omega)} \hspace{2mm}\text{where}\hspace{2mm} \frac{1}{k}=\frac{1}{q}-\frac{1}{p}.
  \end{equation}
\end{theorem}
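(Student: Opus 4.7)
The estimate is essentially a direct unpacking of the two definitions in play. The only things to check carefully are that the quantities involved are finite, and that the factor of $2$ in the statement absorbs the slack inherent in choosing a near-minimizer for the infimum defining $\Gamma^q_B$.

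Fix $u\in W^{p}_0(B,\Omega)$. My first step is to verify that $\Gamma^q_B(u)<\infty$: since $\Omega$ is a Sobolev $(p,q)$-extension domain, $u\in W^{1,p}(\Omega)$ admits an extension $Eu\in W^{1,q}(\mathbb{R}^n)$, and $Eu|_B$ is an admissible competitor in the infimum defining $\Gamma^q_B(u)$. I also note that $\|u\|_{W^{1,p}(\Omega)}=\|u\|_{W^{1,p}(B\cap\Omega)}$, because $u$ vanishes on $\Omega\setminus B$ by definition of $W^{p}_0(B,\Omega)$; this matches the norm in the denominator of \cref{setfun} with the norm in the conclusion. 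Applying the supremum \cref{setfun} defining $\Phi(B)$ to this specific admissible $u$ then yields immediately
\begin{equation*}
  \Gamma^q_B(u)\leq \Phi(B)^{1/k}\,\|u\|_{W^{1,p}(B\cap\Omega)}.
\end{equation*}

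Second, by the very definition of $\Gamma^q_B(u)$ as an infimum over those $v\in W^{1,q}(B)$ with $v|_{B\cap\Omega}\equiv u$, for any $\eps>0$ I can select a competitor $v$ with $\|\nabla v\|_{L^q(B)}\leq \Gamma^q_B(u)+\eps$. Choosing $\eps=\Phi(B)^{1/k}\|u\|_{W^{1,p}(B\cap\Omega)}$ and chaining with the previous bound produces
\begin{equation*}
  \|\nabla v\|_{L^q(B)}\leq 2\,\Phi(B)^{1/k}\,\|u\|_{W^{1,p}(B\cap\Omega)},
\end{equation*}
which is exactly the claimed estimate; this choice of $\eps$ also transparently handles the corner case where the right-hand side vanishes. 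No substantial obstacle arises here: the theorem is essentially a reformulation of the defining property of $\Phi$, made pointwise in $u$ by selecting an explicit near-optimal extension.
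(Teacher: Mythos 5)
The paper does not prove this theorem itself; it is cited verbatim from \cite{PUZ} (``The following theorems are taken from \cite{PUZ}''), so there is no internal proof to compare your argument against. Your proof is essentially the expected one: unpack the supremum defining $\Phi$ to get $\Gamma^q_B(u)\le\Phi(B)^{1/k}\|u\|_{W^{1,p}(B\cap\Omega)}$, then pick a near-minimizer for the infimum defining $\Gamma^q_B$ and absorb the slack into the factor $2$. Your observation that the infimum is nonempty (take $Eu|_B$ as a competitor, using the global extension operator) and that $\|u\|_{W^{1,p}(\Omega)}=\|u\|_{W^{1,p}(B\cap\Omega)}$ are exactly the right finiteness checks.

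One small quibble: your parenthetical claim that choosing $\eps=\Phi(B)^{1/k}\|u\|_{W^{1,p}(B\cap\Omega)}$ ``transparently handles the corner case where the right-hand side vanishes'' is not quite right as stated, since setting $\eps=0$ gives no competitor from the infimum alone. The corner case is genuinely benign, but for a different reason: $\|u\|_{W^{1,p}(B\cap\Omega)}=0$ forces $u\equiv0$ on $\Omega$ (as $u$ also vanishes on $\Omega\setminus B$ by definition of $W^p_0(B,\Omega)$), so $v\equiv0$ works; and $\Phi(B)>0$ whenever $B$ is centered at a boundary point, since one can produce nonconstant test functions in $W^p_0(B,\Omega)$ whose $q$-Dirichlet extensions necessarily have positive gradient energy. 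Apart from this cosmetic point, the argument is correct.
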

By combining Theorem \ref{thmset} and \eqref{funcset} we obtain a key estimate.
\begin{corollary}\label{corollary}
    Let $1 \leq q < p < \infty$. Let $\Omega \subset \mathbb{R}^n$ be a bounded Sobolev $(p,q)$-extension domain and $\Phi$ be the set function from \cref{setfun}. Then for every $x\in\partial\Omega$ for which $\overline{D\Phi}(x)<\infty,$ there exist $r_{x}$ and $C_{x}$ so that for each ball $B(x,r)$ with $0<r<r_x$ and every $u\in W^{p}_{0}(B(x,r),\Omega) $ there exists an extension $Eu\in W^{1,q}(B(x,r))$ of $u$ with
     \begin{equation}
\|\nabla Eu\|_{L^q(B(x,r))}\leq C_x |B(x,r)|^{\frac{1}{k}}\|u\|_{W^{1,p}(B(x,r)\cap\Omega)} \hspace{2mm}\text{where}\hspace{2mm} \frac{1}{k}=\frac{1}{q}-\frac{1}{p}.
  \end{equation}
Especially, the conclusion holds for almost every $x\in \partial\Omega.$
\end{corollary}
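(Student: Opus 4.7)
The plan is to combine Theorem \ref{thmset} with the pointwise consequence \eqref{funcset} of the hypothesis $\overline{D\Phi}(x)<\infty$, exactly as suggested by the statement. First, I would fix any boundary point $x$ with $\overline{D\Phi}(x)<\infty$ and select $r_x>0$ together with a constant $M_x>0$ such that $\Phi(B(x,r))\le M_x|B(x,r)|$ for every $0<r<r_x$; this is precisely what the finiteness of the $\limsup$ defining $\overline{D\Phi}(x)$ provides, and it is the content of \eqref{funcset}.

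Next, for each such $r$ and each $u\in W_{0}^{p}(B(x,r),\Omega)$, I would apply Theorem \ref{thmset} to the ball $B:=B(x,r)$. This produces a function $Eu\in W^{1,q}(B(x,r))$ extending $u$ across $B(x,r)\cap\partial\Omega$ and satisfying
$$\|\nabla Eu\|_{L^q(B(x,r))}\leq 2\Phi^{1/k}(B(x,r))\,\|u\|_{W^{1,p}(B(x,r)\cap\Omega)}.$$
Raising the bound from the first step to the power $1/k$ gives $\Phi^{1/k}(B(x,r))\le M_x^{1/k}|B(x,r)|^{1/k}$ (both sides are nonnegative, so this is legitimate), and substituting yields the estimate claimed in the corollary with $C_x:=2M_x^{1/k}$.

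For the final sentence, I would invoke Lemma \ref{lemmafun}: since $\Phi$ is a bounded quasiadditive set function defined on a family containing all open balls, Lemma \ref{lemmafun} asserts $\overline{D\Phi}(x)<\infty$ for almost every $x\in\mathbb{R}^n$, and in particular for almost every $x\in\partial\Omega$, so the previous construction applies at almost every boundary point.

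There is no substantive obstacle here; the statement is a packaging step that marries the Dirichlet-energy extension inequality of Theorem \ref{thmset} with the a.e. finiteness of the upper derivative of $\Phi$ supplied by Lemma \ref{lemmafun}. The only item requiring a moment's care is that both $r_x$ and $C_x$ genuinely depend on $x$, because $\overline{D\Phi}(x)$ is only controlled pointwise and not uniformly along $\partial\Omega$; the corollary is stated in exactly this form, and this dependence is harmless for the intended downstream use.
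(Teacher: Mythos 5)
Your proof is correct and is exactly the paper's argument: the paper deduces the corollary in one line by ``combining Theorem~\ref{thmset} and \eqref{funcset},'' which is precisely what you do, and the ``almost every'' clause likewise follows from Lemma~\ref{lemmafun} as you state.
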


\section{Auxiliary Lemmas}
\begin{lemma}\label{Lbound}
    Let $\Omega $ be a bounded Sobolev $(p,q)$-extension domain with $1\leq q<p<q^*.$ Then there exists $\delta >0$ and $s=s(p,q)\geq n$ such that 
    \begin{equation*}
        |B(x,r)\cap \Omega|\geq \delta r^s,
    \end{equation*}
    for every $x\in \partial \Omega$ and all $0<r\leq \frac{\diam(\Omega)}{6}.$ When $q<n,$ we may choose $s= \frac{1}{\frac{1}{p}-\frac{1}{q^*}}$.
\end{lemma}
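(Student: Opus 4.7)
The plan is to adapt the HKT cutoff technique. By Lemma \ref{reduction}, we may assume $q<n$, so that $s := 1/(1/p - 1/q^\ast)$ is well-defined; since $1/s = 1/n - 1/k$ with $1/k = 1/q - 1/p > 0$, one has $s > n$. Fix $x \in \partial\Omega$ and $0 < r \leq \diam(\Omega)/6$, and set $R_0 = 2\diam(\Omega)$, so that $\Omega \subset B(x, R_0/2)$. Take a Lipschitz cutoff $\eta$ with $\eta \equiv 1$ on $B(x, r/2)$, $\eta \equiv 0$ outside $B(x, r)$, and $|\nabla\eta| \leq 4/r$, and set $u = \eta|_\Omega$. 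Since $\Omega \subset B(x, R_0)$, $u$ lies in $W_0^p(B(x, R_0), \Omega)$ vacuously, and writing $V = |B(x, r) \cap \Omega|$, one computes $\|u\|_{W^{1,p}(\Omega)} \leq Cr^{-1}V^{1/p}$.

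Applying Theorem \ref{thmset} on $B(x, R_0)$ together with the global upper bound on $\Phi$ yields $v \in W^{1,q}(B(x, R_0))$ with $v|_\Omega = u$ and $\|\nabla v\|_{L^q(B(x, R_0))} \leq Cr^{-1}V^{1/p}$; truncating, we may take $0 \leq v \leq 1$. The Sobolev--Poincar\'e inequality on $B(x, R_0/2)$ then gives
\begin{equation*}
\|v - c\|_{L^{q^\ast}(B(x, R_0/2))} \leq Cr^{-1}V^{1/p}, \qquad c := v_{B(x, R_0/2)} \in [0,1].
\end{equation*}

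Now $v = 1$ on $B(x, r/2) \cap \Omega$ and $v = 0$ on $\Omega \setminus B(x, r)$. The latter set has measure bounded below by a uniform constant $c_\Omega > 0$: fix two interior balls of $\Omega$ whose centers are nearly diameter-apart; since $r \leq \diam(\Omega)/6$, for every boundary point $x$ at least one of these balls lies outside $B(x, r)$. Minimizing $V(r/2)(1-c)^{q^\ast} + c_\Omega c^{q^\ast}$ over $c \in [0,1]$ and comparing with the $L^{q^\ast}$ bound produces the dichotomy: \emph{either} $V \geq c' r^p$ (which gives $V \geq \delta r^s$ directly since $p < s$ and $r$ is bounded), \emph{or} the scaling inequality $V(r) \geq c\,r^p V(r/2)^{p/q^\ast}$ holds.

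The main technical obstacle is closing the recursion when only the second alternative is active. Iterating $V(r) \geq c\,r^p V(r/2)^{p/q^\ast}$ downward through dyadic scales, the exponent of $r$ telescopes via the geometric series $p \sum_{k \geq 0} (p/q^\ast)^k = s$, convergent precisely because $p < q^\ast$ and reflected in the algebraic identity $sq^\ast/p - q^\ast = s$. A careful iterative argument, using the first alternative of the dichotomy as the base case at the smallest active scale, stitches the two alternatives together to produce $V \geq \delta r^s$ with $\delta$ depending only on $\Omega, n, p, q$. A routine monotonicity step handles the range of $r$ in which the gradient estimate $\|u\|_{W^{1,p}} \leq Cr^{-1}V^{1/p}$ is not tight.
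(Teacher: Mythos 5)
Your overall strategy---cut off near $x$, extend, apply a Sobolev-type inequality, and iterate over scales---is the right one, and it is indeed the ``HKT technique'' that the paper adapts. However, the specific iteration you set up does not close, and this is a genuine gap rather than a detail.

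The recursion you derive, $V(r) \geq c\,r^p V(r/2)^{p/q^\ast}$, expresses a \emph{lower} bound for $V$ at scale $r$ in terms of $V$ at the \emph{smaller} scale $r/2$. Iterating downward through fixed dyadic scales, after $k$ steps you are left with a factor $V(r/2^k)^{(p/q^\ast)^k}$, and you need this to stay bounded away from zero. You propose to anchor this with the first branch of the dichotomy, $V \geq c' r^p$. But that branch only fires when $\min\{|B(x,r/2)\cap\Omega|, c_\Omega\} = c_\Omega$, that is, when $|B(x,r/2)\cap\Omega| \geq c_\Omega$ for a \emph{fixed} constant $c_\Omega > 0$; as $r \to 0$ this eventually fails for every $r$, so the anchor never fires at the small scales where it is needed. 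Without an a priori polynomial lower bound on $V(\rho)$ (which is precisely what the lemma is trying to prove), the quantity $V(r/2^k)^{(p/q^\ast)^k}$ is uncontrolled: $V(\rho)$ could in principle decay like $\exp(-a^k)$ for $\rho = r/2^k$ with $a$ so large that $a\,(p/q^\ast)^k \to \infty$, killing the product. So the telescoping on a fixed dyadic grid does not self-improve to a uniform bound.

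The paper avoids this entirely by choosing the radii \emph{adaptively} rather than dyadically: it picks $r_j \downarrow 0$ so that $|B(x,r_j)\cap\Omega| = |B(x,r)\cap\Omega|/2^j$, takes a tent cutoff supported in the annulus between $r_j$ and $r_{j-1}$, extends, and applies the global Sobolev inequality $\|Eu\|_{L^{q^\ast}} \leq C\|\nabla Eu\|_{L^q}$ (no Sobolev--Poincar\'e, no dichotomy, and no set-function machinery is needed for this lemma --- the global extension operator suffices). The resulting one-step estimate reads $r_{j-1} - r_j \leq C\, 2^{-j(1/p - 1/q^\ast)}\,|B(x,r)\cap\Omega|^{1/p - 1/q^\ast}$, which \emph{does} telescope because $r_j \to 0$ forces $\sum_j (r_{j-1} - r_j) = r$, while the geometric series in $2^{-j(1/p-1/q^\ast)}$ converges precisely because $p < q^\ast$. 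Making the volume decay exactly geometric along the scale sequence is the whole trick; on your fixed dyadic grid that geometric decay of $V$ is not available, and that is where the argument breaks. If you switch to the paper's adaptive radii your cutoff-and-embedding computation goes through essentially verbatim.
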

\begin{proof}
By Lemma \ref{reduction} we may without loss of generality assume that $1\leq q<n.$

    Fix $0<r\leq \frac{\diam(\Omega)}{6}$ and $x\in \partial \Omega.$ Then $|\Omega \setminus B(x,r)|\geq \delta_{0}>0,$ where $\delta_{0}$ neither depends on $x$ nor on $r.$\\
    Write $r_{0}=r.$ For each $j\geq 1,$ choose $r_{j}$ so that $|B(x,r_j)\cap \Omega|= \frac{|B(x,r)\cap\Omega|}{2^j}.$\\
    Fix $j\geq 1$ and define 
    \begin{equation*}
        u(y)= \max\{0,1-\frac{1}{(r_{j-1}-r_{j})}d(y,B(x,r_j))\}\hspace{2mm}\text{if}\hspace{2mm}x\in \Omega.
    \end{equation*}
    Then,
    \begin{eqnarray}
        \|u\|_{W^{1,p}(\Omega)}&\leq&  |(B(x,r_{j-1})\setminus B(x,r_j))\cap \Omega 
 |^{\frac{1}{p}}\frac{1}{(r_{j-1}-r_{j})}+ |B(x,r_{j-1})\cap \Omega |^{\frac{1}{p}},\nonumber\\
        &\leq& |B(x,r_{j-1})\cap \Omega|^{\frac{1}{p}}(r_{j-1}-r_{j})^{-1}+ |B(x,r_{j-1})\cap \Omega |^{\frac{1}{p}},\nonumber\\
        &\leq& (1+(r_{j-1}-r_{j})^{-1})|B(x,r_{j-1})\cap\Omega|^{\frac{1}{p}},\nonumber\\
        &\leq& 2(r_{j-1}-r_{j})^{-1}|B(x,r_{j-1})\cap\Omega|^{\frac{1}{p}}.
    \end{eqnarray}
    So $u\in W^{1,p}(\Omega),$
    and hence  $Eu \in W^{1,q}(\mathbb{R}^n)$ with
    \begin{equation}\label{exteq}
        \|Eu\|_{W^{1,q}(\mathbb{R}^n)}\leq C\|u\|_{W^{1,p}(\Omega)}.
    \end{equation}
    By the Global Sobolev embedding  inequality \eqref{sobolev}
    \begin{equation}\label{gpeq}
        \|Eu\|_{L^{q^*}(\mathbb{R}^n)}\leq C_{q}\|\nabla Eu\|_{L^{q}(\mathbb{R}^n)}\leq C_{q}\|Eu\|_{W^{1,q}(\mathbb{R}^n)}.
    \end{equation}
    By combining \eqref{exteq} and \eqref{gpeq}, we conclude that
    \begin{equation}\label{q*}
         \|Eu\|_{L^{q^*}(\mathbb{R}^n)}\leq C^{'}\|u\|_{W^{1,p}(\Omega)}.
    \end{equation}
    Since $u\equiv 1 $ on $B(x,r_{j}),$ we conclude that
\begin{equation}\label{poieq}
    |B(x,r_j)\cap\Omega|^{\frac{1}{q^*}}\leq  \|Eu\|_{L^{q^*}(\mathbb{R}^n)}.
\end{equation}
By using \eqref{q*}  and \eqref{poieq}, we get
\begin{eqnarray*}
    |B(x,r_j)\cap\Omega|^{\frac{1}{q^*}}\leq C_{n,q}(r_{j-1}-r_{j})^{-1}|B(x,r_{j-1})\cap\Omega|^{\frac{1}{p}}.
\end{eqnarray*}
This implies that
\begin{equation*}
  \frac{|B(x,r)\cap\Omega|^{\frac{1}{q^*}}}{2^{\frac{j}{q^*}}}(r_{j-1}-r_{j})\leq C_{n,q}\frac{|B(x,r)\cap\Omega|^{\frac{1}{p}}}{2^{\frac{(j-1)}{p}}},
\end{equation*}
that is,
\begin{equation}\label{rsum}
     (r_{j-1}-r_{j})\leq C_{n,q}\left(\frac{|B(x,r)\cap\Omega|^{\frac{1}{p}-\frac{1}{q^*}}}{2^{j(\frac{1}{p}-\frac{1}{q^*})}}\right).
\end{equation}
Since $p<q^*$, by summing \eqref{rsum} over $j,$ we arrive at
\begin{equation*}
    r\leq \sum_{j=1}^{\infty}(r_{j-1}-r_{j})\leq C_{n,q}|B(x,r)\cap\Omega|^{\frac{1}{p}-\frac{1}{q^*}}\left(\sum_{j=1}^{\infty}\frac{1}{2^{j(\frac{1}{p}-\frac{1}{q^*})}}\right).
\end{equation*}
Hence, for $\delta = \left(\frac{1}{C_{n,q}M}\right)^{\frac{1}{\frac{1}{p}-\frac{1}{q^*}}}$, where $M:= \left(\sum_{j=1}^{\infty}\frac{1}{2^{j(\frac{1}{p}-\frac{1}{q^*})}}\right),$ we have  
\begin{equation*}
|B(x,r)\cap \Omega|\geq \delta r^{\frac{1}{\frac{1}{p}-\frac{1}{q^*}}}.
\end{equation*}
\end{proof}
For a domain $\Omega$ and given $x\in\partial\Omega$, we define $A_{r_{1},r_{2},\Omega}$ as
\begin{equation*}
    A_{r_{1},r_{2},\Omega}:=\{y\in \mathbb{R}^n: r_{1}<|y-x|<r_{2}\}\cap\Omega,
\end{equation*}
where $r_{2}>r_{1}>0.$ 
\begin{lemma}\label{lemma1}
  Let  $0<c<1 $ and $1 \leq q < p < q^*.$  Let $\Omega$ be a bounded Sobolev $(p,q)$-extension domain. If, for a given $x\in \partial \Omega,$ both
   \begin{equation*}
       \limsup_{r\to 0}\frac{|B(x,r)\cap \Omega|}{r^n}=0
   \end{equation*}
   and 
   \begin{equation*}
       \overline{D\Phi}(x)<\infty
   \end{equation*}
  hold, then there exists $r_{x,c}>0$ such that
   \begin{equation*}
       \min\{|A_{r,2r,\Omega}|,|B(x,r/2)\cap\Omega|\}\leq c|B(x,r)\cap\Omega|,
   \end{equation*}
   when $0<r<r_{x,c}.$
\end{lemma}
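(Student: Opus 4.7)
The plan is to apply Corollary~\ref{corollary} to a cutoff function equal to $1$ on $B(x,r/2)\cap\Omega$ and $0$ on $\Omega\setminus B(x,r)$, combine the resulting extension with the Sobolev-Poincar\'e inequality on $B(x,2r)$, and then decide which of the two alternatives in the conclusion is realized according to the size of the mean $v_{B(x,2r)}$.

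For $0<r<r_{x}/4$, so that Corollary~\ref{corollary} is available on $B(x,4r)$, I would take the cutoff $u$ on $\Omega$ defined by $u(y)=1$ for $|y-x|\le r/2$, $u(y)=2-2|y-x|/r$ for $r/2\le|y-x|\le r$, and $u(y)=0$ for $|y-x|\ge r$. Then $u\in W_{0}^{p}(B(x,4r),\Omega)$, $|\nabla u|\le 2/r$ is supported on $A_{r/2,r,\Omega}$, and, for $r$ small enough that $|B(x,r)\cap\Omega|\le r^{n}$,
\[
\|u\|_{W^{1,p}(B(x,4r)\cap\Omega)}\le C\,r^{-1}|B(x,r)\cap\Omega|^{1/p}.
\]
Corollary~\ref{corollary} then produces $v\in W^{1,q}(B(x,4r))$ extending $u$ on $B(x,4r)\cap\Omega$ with
\[
\|\nabla v\|_{L^{q}(B(x,4r))}\le C_{x}\,r^{n/k-1}|B(x,r)\cap\Omega|^{1/p},\qquad \tfrac{1}{k}=\tfrac{1}{q}-\tfrac{1}{p},
\]
and the Sobolev-Poincar\'e inequality~\eqref{eq:locp-poin} applied to $v$ on $B(x,2r)$, with gradient on $B(x,4r)$, yields
\[
\left(\int_{B(x,2r)}|v-v_{B(x,2r)}|^{q^{*}}\right)^{1/q^{*}}\le C_{x}\,r^{n/k-1}|B(x,r)\cap\Omega|^{1/p}.
\]
By construction $v\equiv 1$ on $B(x,r/2)\cap\Omega$ and $v\equiv 0$ on $A_{r,2r,\Omega}$, both contained in $B(x,2r)$. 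If $v_{B(x,2r)}\le 1/2$ then $|v-v_{B(x,2r)}|\ge 1/2$ on the first set; otherwise $|v-v_{B(x,2r)}|\ge 1/2$ on the second. Raising to the power $q^{*}$ gives
\[
\min\{|B(x,r/2)\cap\Omega|,\,|A_{r,2r,\Omega}|\}\le C_{x}\,r^{q^{*}(n/k-1)}|B(x,r)\cap\Omega|^{q^{*}/p}.
\]

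The final step is the algebraic identity
\[
q^{*}\bigl(n/k-1\bigr)+n\bigl(q^{*}/p-1\bigr)=0,
\]
which one verifies directly from $q^{*}=qn/(n-q)$. Factoring
\[
|B(x,r)\cap\Omega|^{q^{*}/p}=\bigl(|B(x,r)\cap\Omega|/r^{n}\bigr)^{q^{*}/p-1}\,r^{n(q^{*}/p-1)}\,|B(x,r)\cap\Omega|,
\]
the powers of $r$ cancel and we arrive at
\[
\min\{|B(x,r/2)\cap\Omega|,\,|A_{r,2r,\Omega}|\}\le C_{x}\left(\frac{|B(x,r)\cap\Omega|}{r^{n}}\right)^{q^{*}/p-1}|B(x,r)\cap\Omega|.
\]
Since $p<q^{*}$ the exponent is strictly positive, so the density hypothesis $\limsup_{r\to 0}|B(x,r)\cap\Omega|/r^{n}=0$ drives the prefactor below $c$ once $r<r_{x,c}$ for a suitable $r_{x,c}$. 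The main obstacle is recognizing the above exponent identity: the scale gain $r^{n/k}$ from Corollary~\ref{corollary} must exactly balance the Sobolev exponent $q^{*}$, and that balance is what encodes the assumption $p<q^{*}$ in the argument, converting the qualitative density hypothesis into the required quantitative estimate.
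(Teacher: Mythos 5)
Your proposal is correct and follows essentially the same path as the paper's proof: the same cutoff function supported on $B(x,r)$, the same appeal to Corollary~\ref{corollary} for the extension, the same Sobolev--Poincar\'e step producing the minimum of the two intersection volumes raised to $1/q^*$, and the same exponent algebra exploiting $p<q^*$ (the paper verifies the identity term by term while you package it as $q^*(n/k-1)+n(q^*/p-1)=0$, then absorb everything into the density ratio $|B(x,r)\cap\Omega|/r^n$). Your presentation is slightly cleaner in combining the $L^p$ norms of $u$ and $\nabla u$ into the single bound $Cr^{-1}|B(x,r)\cap\Omega|^{1/p}$ and in making the ``mean $\le 1/2$ vs.\ $>1/2$'' dichotomy explicit; one minor remark is that this cutoff estimate requires only $r<1$, not the extra condition $|B(x,r)\cap\Omega|\le r^n$ you state.
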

\begin{proof}
Given $\delta>0$, our assumptions give the existence of $0< r_{x,\delta}<1$ such that
\begin{equation}\label{besti}
    |B(x,r)\cap \Omega|\leq \delta r^n,
\end{equation}
whenever $0<r<r_{x,\delta}.$\\

Consider  the function $u$ defined by
\begin{eqnarray*}
    u(y):=\begin{cases}
        1, \hspace{3mm} \text{if}\hspace{1mm} y\in B(x,r/2)\cap\Omega,\\
        -2\frac{|y-x|}{r}+2,  \hspace{4mm}\text{if}\hspace{1mm} y\in [B(x,r)\cap\Omega]\setminus B(x,r/2),\\
         0, \hspace{5mm} \text{if}\hspace{1mm} y\in \Omega\setminus B(x,r) .\\
    \end{cases}
\end{eqnarray*}
 By Corollary \ref{corollary} there exist $ r_{x}$ and $C_x$ such that, for $0<r<r_{x},$
\begin{equation}\label{puz}
    \|\nabla Eu\|_{L^q(B(x,2r))}\leq C_{x}r^{n/q -n/p}\|u\|_{W^{1,p}(B(x,r)\cap \Omega)}   
\end{equation}
for an extension $Eu$ of $u$, where $C_{x}>1.$\\
Next, by using the Sobolev-Poincar\'e inequality \eqref{eq:locp-poin}, \eqref{puz} and the definition of $u$ we deduce that
\begin{eqnarray}\label{mineq}
      \min\{|A_{r,2r,\Omega}|^{\frac{1}{q^*}},|B_{r/2, \Omega}|^{\frac{1}{q^*}}\}&\leq& C(n,q)\left(r^{(n/q-n/p)}\|u\|_{L^p(B(x,r)\cap \Omega)}+r^{(n/q -n/p)}\|\nabla u\|_{L^p(B(x,r)\cap \Omega)}\right),\nonumber \\
      &\leq&C(n,q)\left(r^{(n/q-n/p)}|B(x,r)\cap\Omega|^{\frac{1}{p}}+r^{(n/q -n/p-1)}|A_{r/2,r,\Omega}|^{\frac{1}{p}}\right),
\end{eqnarray}
when $0<r<r_x.$

Case 1:- If $\min\{|A_{r,2r,\Omega}|^{\frac{1}{q^*}}, |B(x,r/2)\cap\Omega|^{\frac{1}{q^*}}\}= |A_{r,2r,\Omega}|^{\frac{1}{q^*}}$, then we conclude that
\begin{equation}
    |A_{r,2r,\Omega}|\leq C(n,q)r^{(n/q-n/p)q^*}|B(x,r)\cap\Omega|^{\frac{q^*}{p}}+C(n,q)r^{(n/q -n/p-1)q^*}|A_{r/2,r,\Omega}|^{\frac{q^*}{p}}.
\end{equation}
In other words
\begin{eqnarray*}
     |A_{r,2r,\Omega}|&\leq& C(n,q)r^{(n/q-n/p)q^*}|B(x,r)\cap\Omega|^{\frac{q^*}{p}-1}|B(x,r)\cap\Omega|\\
     & & +C(n,q)r^{(n/q -n/p-1)q^*}|A_{r/2,r,\Omega}|^{\frac{q^*}{p}-1}|A_{r/2,r,\Omega}|.
\end{eqnarray*}
We use \eqref{besti} and the assumption that $p<q^*$ to conclude that, for $0<r<\min\{r_x,r_{x,\delta}\},$
\begin{eqnarray*}
     |A_{r,2r,\Omega}| 
     &\leq &\delta^{(\frac{q^*}{p}-1)} \cdot C(n,q)r^{(n/q -n/p)q^*}r^{n(\frac{q^*}{p}-1)}|B(x,r)\cap\Omega|\\
     & & +\delta^{(\frac{q^*}{p}-1)}\cdot C(n,q)r^{(n/q-n/p-1)q^*}r^{n(\frac{q^*}{p}-1)}|B(x,r)\cap\Omega|\\
     &\leq& \delta^{(\frac{q^*}{p}-1)}\cdot C(n,q)(r^{q*}+1)|B(x,r)\cap\Omega|\\
     &\leq& 2\delta^{(\frac{q^*}{p}-1)}\cdot C(n,q)|B(x,r)\cap\Omega|.
\end{eqnarray*}
Now, choosing $\delta^{(\frac{q^*}{p}-1)}<\frac{c}{2C(n,q)},$ we get
\begin{equation}
    |A_{r,2r,\Omega}|\leq c|B(x,r)\cap \Omega|.
\end{equation}
Case 2:-Suppose finally that $\min\{|A_{r,2r,\Omega}|^{\frac{1}{q^*}}, |B(x,r/2)\cap\Omega|^{\frac{1}{q^*}}\}= |B(x,r/2)\cap\Omega|^{\frac{1}{q^*}}$.
Then \eqref{mineq} gives
\begin{equation}
   |B(x,r/2)\cap\Omega|\leq C(n,q)r^{(n/q-n/p)q^*}|B(x,r)\cap\Omega|^{\frac{q^*}{p}}+C(n,q)r^{(n/q -n/p-1)q^*}|A_{r/2,r,\Omega}|^{\frac{q^*}{p}}.
\end{equation}
In other words
\begin{eqnarray*}
     |B(x,r/2)\cap\Omega|&\leq& C(n,q)r^{(n/q-n/p)q^*}|B(x,r)\cap\Omega|^{\frac{q^*}{p}-1}|B(x,r)\cap\Omega|\\
     & & +C(n,q)r^{(n/q -n/p-1)q^*}|A_{r/2,r,\Omega}|^{\frac{q^*}{p}-1}|A_{r/2,r,\Omega}|.
\end{eqnarray*}
We use \eqref{besti} and the assumption that $p<q^*$ to conclude that, for $0<r<\min\{r_x,r_{x,\delta}\},$
\begin{eqnarray*}
     |B(x,r/2)\cap\Omega|&\leq &\delta^{(\frac{q^*}{p}-1)} \cdot C(n,q)r^{(n/q -n/p)q^*}r^{n(\frac{q^*}{p}-1)}|B(x,r)\cap\Omega|\\
     & & +\delta^{(\frac{q^*}{p}-1)}\cdot C(n,q)r^{(n/q-n/p-1)q^*}r^{n(\frac{q^*}{p}-1)}|B(x,r)\cap\Omega|\\
     &\leq& \delta^{(\frac{q^*}{p}-1)}\cdot C(n,q)(r^{q*}+1)|B(x,r)\cap\Omega|\\
     &\leq& 2\delta^{(\frac{q^*}{p}-1)}\cdot C(n,q)|B(x,r)\cap\Omega|.
\end{eqnarray*}
 Now, choosing $\delta^{(\frac{q^*}{p}-1)}<\frac{c}{2C(q,n)},$ we get
\begin{equation}
    |B(x,r/2)\cap\Omega|\leq c|B(x,r)|.
\end{equation}
\end{proof}
Towards the upper bound for the volume, we employ the following lemma.
\begin{lemma}\label{upper}
  Let  $0<c<1/2 $ and $1\leq q < p<q^*$ be given and let $\Omega$ be a bounded Sobolev $(p,q)$-extension domain. If, for a given $x\in \partial \Omega,$ both
   \begin{equation*}
       \limsup_{r\to 0}\frac{|B(x,r)\cap \Omega|}{r^n}=0,
   \end{equation*}
   and 
   \begin{equation*}
       \overline{D\Phi}(x)<\infty
   \end{equation*}
   hold, then there exists $r_{x,c}>0$ such that
   \begin{equation*}
       |B(x,r/2)\cap\Omega|\leq c|B(x,r)\cap\Omega|,
   \end{equation*}
   when $0<r<r_{x,c}.$
\end{lemma}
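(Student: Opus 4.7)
The plan is to argue by contradiction, iterating Lemma \ref{lemma1} along a doubling chain of radii to force the volume to grow too slowly to match the fixed positive mass $|B(x,R)\cap\Omega|$ at a macroscopic scale.

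Suppose there is a sequence $r_k\downarrow 0$ at which the desired inequality fails, i.e.\ $|B(x,r_k/2)\cap\Omega|>c\,|B(x,r_k)\cap\Omega|$. Abbreviate $b_j:=|B(x,2^jr_k)\cap\Omega|$. Because $b_{-1}>cb_0$, the second alternative in Lemma \ref{lemma1} applied at radius $r_k$ (with the same constant $c$) is ruled out, so the annulus alternative $|A_{r_k,2r_k,\Omega}|\le cb_0$ must hold, which gives $b_1\le(1+c)b_0$.

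Next I would iterate this upward in scale. The key algebraic input is that $c<1/2$ implies $c(1+c)<1$. Suppose inductively that $b_j\le(1+c)b_{j-1}$ at some step $j\ge 1$. Then
\begin{equation*}
b_{j-1}\;\ge\;\frac{b_j}{1+c}\;>\;cb_j,
\end{equation*}
so the inner-ball alternative of Lemma \ref{lemma1} at radius $2^jr_k$ is again ruled out, and the annulus alternative yields $b_{j+1}\le(1+c)b_j$. By induction $b_j\le(1+c)^jb_0$ for every $j\ge 0$ such that $2^{j-1}r_k$ lies below the applicability threshold of Lemma \ref{lemma1}.

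Now fix a positive $R$ below that threshold (so that $|B(x,R)\cap\Omega|>0$ is a fixed positive quantity, e.g.\ via Lemma \ref{Lbound}), and for each $k$ choose $J_k$ with $2^{J_k}r_k\in[R,2R)$. The iteration gives $|B(x,R)\cap\Omega|\le b_{J_k}\le(1+c)^{J_k}b_0\le C(R/r_k)^{\alpha}|B(x,r_k)\cap\Omega|$ with $\alpha:=\log_2(1+c)$. Since $c<1$ we have $\alpha<1<n$, while the density hypothesis $\limsup_{r\to 0}|B(x,r)\cap\Omega|/r^n=0$ makes $|B(x,r_k)\cap\Omega|=o(r_k^n)$. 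Hence the right-hand side is of order $o(r_k^{n-\alpha})\to 0$, contradicting the positivity of $|B(x,R)\cap\Omega|$. The \emph{main delicate point} is the iteration: the assumption $c<1/2$ enters precisely because $c(1+c)<1$ is exactly what excludes the inner-ball alternative of Lemma \ref{lemma1} at every scale, keeping the annulus alternative, and hence the geometric growth bound, available throughout the chain.
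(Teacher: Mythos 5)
Your proof is correct and takes essentially the same approach as the paper: iterate Lemma \ref{lemma1} along dyadic scales, use $c<1/2$ (equivalently $c(1+c)<1$, equivalently $\tfrac{c^2}{1-c}<1$) to rule out the inner-ball alternative at each step, obtain the geometric growth bound, and reach a contradiction at a fixed macroscopic scale. Your bookkeeping (tracking ball volumes $b_j$ with the ratio bound $b_{j+1}\le(1+c)b_j$ and contradicting against $|B(x,R)\cap\Omega|>0$) is a cosmetically cleaner version of the paper's induction, which tracks the annuli $|A_{2^{j-1}r,2^{j}r,\Omega}|$ and contradicts against a fixed positive annulus volume $\sigma$.
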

\begin{proof}
    From Lemma \ref{lemma1}, we have 
    \begin{equation}\label{uslemma}
        \min\{|A_{r,2r,\Omega}|,|B(x,r/2)\cap\Omega|\}\leq c|B(x,r)\cap\Omega|.
    \end{equation}
    when $0<r\leq r_{x,c}^{'}.$
    
    Fix $0<r\leq \frac{r^{'}_{x,c}}{2}$ and suppose first that
    \begin{equation}\label{suppose}
        |A_{r,2r,\Omega}|<|B(x,r/2)\cap\Omega|.
    \end{equation}
    Then, by using \eqref{uslemma}, we get
    \begin{equation}\label{eq 3.12}
        |A_{r,2r,\Omega}|\leq c|B(x,r)\cap\Omega|.
    \end{equation}
    
    Choose $R=2r$. Again by using Lemma \ref{lemma1}, we have 
    \begin{equation}
        \min\{|A_{R,2R,\Omega}|,|B(x,R/2)\cap\Omega|\}\leq c|B(x,R)\cap\Omega|,
    \end{equation}
    since $0<R\leq r_{x,c}^{'}.$
    That is,
    \begin{equation}\label{eq 3.14}
        \min\{|A_{2r,4r,\Omega}|,|B(x,r)\cap\Omega|\}\leq c|B(x,2r)\cap\Omega|.
    \end{equation}
    
    Suppose that
    \begin{equation}
        \min\{|A_{2r,4r,\Omega}|,|B(x,r)\cap\Omega|\}=|B(x,r)\cap\Omega|.
    \end{equation}
    Then by using \eqref{eq 3.14}, we conclude that,
    \begin{equation*}
        |B(x,r)\cap\Omega|\leq c|B(x,2r)\cap\Omega|.
    \end{equation*}
    That is,
    \begin{equation}\label{eq 3.16}
        |B(x,r)\cap\Omega|\leq c(|B(x,r)\cap\Omega|+|A_{r,2r,\Omega}|).
    \end{equation}
    By using \eqref{eq 3.16} and \eqref{eq 3.12}, we deduce that
    \begin{eqnarray*}
        |B(x,r)\cap\Omega|&\leq& \left(\frac{c}{1-c}\right)|A_{r,2r,\Omega}|,\\
        &\leq&c\left(\frac{c}{1-c}\right)|B(x,r)\cap\Omega|,\\
    \end{eqnarray*}
    which cannot hold since $\frac{c^2}{1-c}<1.$ We conclude that 
    \begin{equation}\label{j=2}
         \min\{|A_{2r,4r,\Omega}|,|B(x,r)\cap\Omega|\}=|A_{2r,4r,\Omega}|.
    \end{equation}
    Hence \eqref{eq 3.14} and \eqref{eq 3.12} give
    \begin{eqnarray}\label{eq21}
        |A_{2r,4r,\Omega}|&\leq& c\left(|B(x,2r)\cap\Omega|\right),\\
        &\leq& c\left(|B(x,r)\cap\Omega| + |A_{r,2r,\Omega}|\right), \nonumber\\
        &\leq& (c+c^2)\left(|B(x,r)\cap\Omega|\right)\nonumber,\\
        &\leq& (c+1)^2\left(|B(x,r)\cap\Omega|\right).\nonumber
    \end{eqnarray}
    That is,
    \begin{equation}\label{eq 3.19}
        |A_{2r,4r,\Omega}|\leq (c+1)^2|B(x,r)\cap\Omega|. 
    \end{equation}
    
    We repeat the argument for $R=4r,$ 
    \begin{equation}\label{eq 3.20}
        \min\{|A_{4r,8r,\Omega}|,|B(x,2r)\cap\Omega|\}\leq c|B(x,4r)\cap\Omega|,
    \end{equation}
    assuming that $0<r<\frac{r_{x,c}^{'}}{8}.$

    Suppose that 
    \begin{equation*}
        \min\{|A_{4r,8r,\Omega}|,|B(x,2r)\cap\Omega|\}=|B(x,2r)\cap\Omega|.
    \end{equation*}
    Then, by \eqref{eq 3.19}, we conclude that
    \begin{eqnarray}\label{eq 3.21}
        |B(x,2r)\cap\Omega|&\leq& c|B(x,4r)\cap \Omega|,\\
        &\leq& c(|B(x,2r)\cap\Omega|+|A_{2r,4r,\Omega}|)\nonumber
    \end{eqnarray}
    By using \eqref{eq 3.21} and the first line of \eqref{eq21} we deduce that
    \begin{eqnarray*}
         |B(x,2r)\cap\Omega|&\leq& \left(\frac{c}{1-c}\right)|A_{2r,4r,\Omega}|,\\
         &\leq& c\left(\frac{c}{1-c}\right)|B(x,2r)\cap\Omega|,
    \end{eqnarray*}
    which cannot hold. Hence 
    \begin{equation}\label{eq 3.22}
         \min\{|A_{4r,8r,\Omega}|,|B(x,2r)\cap\Omega|\}=|A_{4r,8r,\Omega}|.
    \end{equation}
    By combining \eqref{eq 3.12}, \eqref{eq 3.19}, \eqref{eq 3.20} and \eqref{eq 3.22}, we obtain
    \begin{eqnarray}\label{eq 3.23}
 |A_{4r,8r,\Omega}|&\leq& c \left(|B(x,4r)\cap\Omega|\right),\\ \nonumber
    &\leq& c \left(|B(x,r)\cap\Omega| + |A_{r,2r,\Omega}|+|A_{2r,4r,\Omega}|\right),\\ \nonumber
    &\leq& (c+c^2+c(c+c^2))|B(x,r)\cap\Omega|,\\ \nonumber
    &\leq& (c+1)^3|B(x,r)\cap\Omega|.
    \end{eqnarray}
    
    We claim that, for $j\in \{1,2,\cdots,\lceil\log_{2}(\frac{r_{x,c}^{'}}{r})\rceil\}$,
    \begin{equation}\label{iterations}
        \min\{|A_{2^{j-1}r,2^{j}r,\Omega}|,|B(x,2^{j-2}r)\cap \Omega|\}= |A_{2^{j-1}r,2^{j}r,\Omega}|
    \end{equation}
    when $0<2^j r\leq r_{x,c}^{'}.$

    We will prove \eqref{iterations} by induction. 
    
    The case $j=1$ holds by our assumption \eqref{suppose} and the case $j=2$ is true by \eqref{j=2}.

    Assume that \eqref{iterations} holds for $j=k.$ We will show that it also holds for $j=k+1.$

    Suppose that for $j=k+1$, \eqref{iterations} does not hold. By Lemma \ref{lemma1}, we obtain 
    \begin{equation*}
         \min\{|A_{2^{k}r,2^{k+1}r,\Omega}|,|B(x,2^{k-1}r)\cap \Omega|\}\leq c|B(x,2^kr)\cap\Omega|,
    \end{equation*}
    which gives
    \begin{equation*}
        |B(x,2^{k-1}r)\cap\Omega|\leq c\left(\frac{c}{1-c}\right)|B(x,2^{k-1}r)\cap\Omega|,
    \end{equation*}
    which cannot hold. Hence \eqref{iterations} is true for the case $j=k+1.$
    
    By using \eqref{iterations} we deduce that, for $j\in \{1,2,\cdots,\lceil\log_{2}(\frac{r_{x,c}^{'}}{r})\rceil\}$,
    \begin{eqnarray*}
        |A_{2^{j-1}r,2^{j}r,\Omega}|&\leq& (c+1)^{j}|B(x,r)\cap\Omega|
    \end{eqnarray*}
    when $0<2^jr\leq r_{x,c}^{'}.$

For $j=\lceil\log_{2}(\frac{r_{x,c}^{'}}{r})
\rceil$, 
\begin{equation*}
    \frac{r_{x,c}^{'}}{2}\leq 2^{j-1}r\leq  r_{x,c}^{'},
\end{equation*}
and since $\sigma:=\underset{r_{x,c}^{'}\leq s\leq 2r^{'}_{x,c}}{\inf}\{|A_{s/2,s,\Omega}|\}>0,$ we get that
   \begin{eqnarray*}
       \sigma &\leq& (c+1)^{j}|B(x,r)\cap\Omega|,\\
       &\leq& C_{n}2^j r^n,\\
       &\leq& C_{n}r_{x,c}^{'}r^{n-1},
   \end{eqnarray*}
   when $0<r<r_{x,c}^{'}.$

This cannot be the case when $r\leq \left(\frac{\sigma}{c_nr_{x,c}^{'}}\right)^{\frac{1}{n-1}}.$ \\
We define
\begin{equation*}
    r_{x,c}:= \left(\frac{\sigma}{c_nr_{x,c}^{'}}\right)^{\frac{1}{n-1}}.
\end{equation*}
In conclusion, \eqref{suppose} cannot hold when $0<r< r_{x,c}$ and we conclude that  
    \begin{equation}
        |B(x,r/2)\cap\Omega|\leq c|A_{r/2,2r,\Omega}|
    \end{equation}
    whenever $0<r<r_{x,c}.$
\end{proof}
\section{Proof of the main theorem}
Suppose that $|\partial \Omega|>0.$ Then there exists $E\subset \partial \Omega,$ such that $|E|>0$ and 
\begin{equation}\label{lbp}
    \lim_{r\to 0}\frac{|\partial \Omega\cap B(x,r)|}{|B(x,r)|}=1
\end{equation}
for all $x\in E.$
Let $x\in E.$ Then \eqref{lbp} yields 
\begin{equation*}\label{phiasum}
    \limsup_{r\to 0}\frac{|B(x,r)\cap\Omega|}{|B(x,r)|}=0.
\end{equation*}

Suppose that $x \in E$ is such that 
\begin{equation}\label{case 2(1)}
    \overline{D\Phi}(x)<\infty.
\end{equation}  
Let $0<c<1/2.$ Then by Lemma \ref{upper}, we have 
\begin{equation}\label{4.3}
    |B(x,r/2)\cap \Omega|\leq c|B(x,r)\cap\Omega|.
\end{equation}
for all $0<r<\min\{\frac{\diam(\Omega)}{6},r_x\}.$\\
By iterating \eqref{4.3}, we conclude that for $j\in \mathbb{N}$
\begin{equation}\label{Literation}
    |B(x,\frac{r}{2^j})\cap\Omega|\leq c^j|B(x,r)\cap\Omega|.
\end{equation}
By Lemma \ref{Lbound}, we also have
\begin{equation}
    |B(x,R)\cap\Omega|\geq \delta R
    ^s,
\end{equation}
for all small $R.$ That is, for all large $j\in \mathbb{N}$
\begin{equation}\label{Uiteration}
    |B(x,\frac{r}{2^j})\cap\Omega|\geq \delta \left(\frac{r}{2^j}\right)^s.
\end{equation}
From \eqref{Uiteration} and \eqref{Literation}, we conclude that
\begin{equation}\label{equa}
    c^j|B(x,r)\cap\Omega|\geq \delta \cdot \left(\frac{r}{2^{j}}\right)^s.
\end{equation}
From \eqref{equa}, we get that
\begin{equation*}
    c^jC(n)r^n \geq \delta\cdot\left(\frac{r}{2^{j}}\right)^s.
\end{equation*}
Hence, for all large $j\in\mathbb{N}$
\begin{equation*}
    j(s \ln{2})\geq\ln{\left(\frac{\delta r^{s-n}}{C(n)c^j}\right)}= j\ln\left(\frac{1}{c}\right)+\ln (b), 
\end{equation*}
where $b= \frac{\delta r^{s-n}}{C(n)}.$
Since $0<c<1/2,$ 
\begin{equation*}
    j\ln{\frac{1}{c}}\geq 2|\ln b|
\end{equation*}
when $j$ is sufficiently large. In conclusion,
\begin{equation}
    j\geq \frac{j}{2s\ln 2}\ln\left({\frac{1}{c}}\right) 
\end{equation}
for all sufficiently large $j.$

Now, choosing $c$ so small that $\ln\left({\frac{1}{c}}\right)\geq 4s\ln{2}$, we get that
\begin{equation*}
    j\geq 2j
\end{equation*}
which cannot hold. Hence there is no $x\in E$ for which \eqref{case 2(1)} holds.
In conclusion, $\overline{D\Phi}(x)=\infty$ for all $x\in E,$ and \eqref{funcset} yields that the set $E$ has measure zero, and hence $|\partial \Omega|=0.$
\bibliographystyle{alpha}
\bibliography{bibliography}

\begin{thebibliography}{HKT08}

\bibitem[GM85]{GOM}
F.~W. Gehring and O.~Martio.
\newblock Quasiextremal distance domains and extension of quasiconformal mappings.
\newblock {\em J. Analyse Math.}, 45:181--206, 1985.

\bibitem[HKT08]{HKT}
Piotr Haj{\l}asz, Pekka Koskela, and Heli Tuominen.
\newblock Sobolev embeddings, extensions and measure density condition.
\newblock {\em J. Funct. Anal.}, 254(5):1217--1234, 2008.

\bibitem[Kos90]{MR1039115}
Pekka Koskela.
\newblock Capacity extension domains.
\newblock {\em Ann. Acad. Sci. Fenn. Ser. A I Math. Dissertationes}, (73):42, 1990.
\newblock Dissertation, University of Jyv\"askyl\"a, Jyv\"askyl\"a, 1990.

\bibitem[KUZ22]{PUZ}
Pekka Koskela, Alexander Ukhlov, and Zheng Zhu.
\newblock The volume of the boundary of a {S}obolev {$(p, q)$}-extension domain.
\newblock {\em J. Funct. Anal.}, 283(12):Paper No. 109703, 49, 2022.

\bibitem[MPi86]{MVP}
V.~G. Maz'ya and S.~V. Poborchi\u~i.
\newblock Extension of functions in {S}. {L}. {S}obolev classes to the exterior of a domain with the vertex of a peak on the boundary. {I}.
\newblock {\em Czechoslovak Math. J.}, 36(111)(4):634--661, 1986.

\bibitem[RR55]{RPR}
T.~Rado and P.~V. Reichelderfer.
\newblock {\em Continuous transformations in analysis. {W}ith an introduction to algebraic topology}, volume Band LXXV of {\em Die Grundlehren der mathematischen Wissenschaften in Einzeldarstellungen mit besonderer Ber\"{u}cksichtigung der Anwendungsgebiete}.
\newblock Springer-Verlag, Berlin-G\"{o}ttingen-Heidelberg, 1955.

\bibitem[Ukh99]{2}
A~Ukhlov.
\newblock Lower estimates for the norm of the extension operator of the weak differentiable functions on domains of carnot groups.
\newblock {\em Proc. of the Khabarovsk State Univ. Mathematics}, 8:33--44, 1999.

\bibitem[Ukh20]{1}
Alexander Ukhlov.
\newblock Extension operators on {S}obolev spaces with decreasing integrability.
\newblock {\em Trans. A. Razmadze Math. Inst.}, 174(3):381--388, 2020.

\bibitem[Vod87]{SKV}
S.~K. Vodop'yanov.
\newblock Existence conditions for the extendability of differentiable functions and bounds for the norm of the extension operator.
\newblock In {\em A. {H}aar memorial conference, {V}ol. {I}, {II} ({B}udapest, 1985)}, volume~49 of {\em Colloq. Math. Soc. J\'anos Bolyai}, pages 957--973. North-Holland, Amsterdam, 1987.

\bibitem[VU03]{3}
S.~K. Vodop'yanov and A.~D. Ukhlov.
\newblock Set functions and their applications in the theory of {L}ebesgue and {S}obolev spaces. {I}.
\newblock {\em Mat. Tr.}, 6(2):14--65, 2003.

\end{thebibliography}

\end{document}